\numberwithin{equation}{section}
\numberwithin{figure}{section}
\theoremstyle{definition}
\newtheorem*{defn*}{\protect\definitionname}
\theoremstyle{plain}
\newtheorem*{lem*}{\protect\lemmaname}
\theoremstyle{definition}
\newtheorem*{example*}{\protect\examplename}
\theoremstyle{plain}
\newtheorem{thm}{\protect\theoremname}
\theoremstyle{definition}
\newtheorem{defn}[thm]{\protect\definitionname}
\theoremstyle{plain}
\newtheorem{cor}[thm]{\protect\corollaryname}
\providecommand{\corollaryname}{Corollary}
\providecommand{\definitionname}{Definition}
\providecommand{\examplename}{Example}
\providecommand{\lemmaname}{Lemma}
\providecommand{\theoremname}{Theorem}
\begin{document}
\title{a magic determinant formula\\
for symmetric polynomials of eigenvalues}
\author{Jules Jacobs (julesjacobs@gmail.com)}
\begin{abstract}
Symmetric polynomials of the roots of a polynomial can be written
as polynomials of the coefficients, and by applying this theorem to
the characteristic polynomial we can write a symmetric polynomial
of the eigenvalues $a_{i}$ of an $n\times n$ matrix $A$ as a polynomial
of the entries of the matrix. We give a magic formula for this: symbolically
substitute $a\mapsto A$ in the symmetric polynomial and replace multiplication
by $\det$. For instance, for a $2\times2$ matrix $A$ with eigenvalues
$a_{1},a_{2}$,
\begin{align*}
a_{1}a_{2}^{2}+a_{1}^{2}a_{2} & =\det(A_{1},A_{2}^{2})+\det(A_{1}^{2},A_{2})
\end{align*}
 where $A_{i}^{k}$ is the $i$-th column of $A^{k}$. One may also
take negative powers, allowing us to calculate:
\begin{align*}
a_{1}a_{2}^{-1}+a_{1}^{-1}a_{2} & =\det(A_{1},A_{2}^{-1})+\det(A_{1}^{-1},A_{2})
\end{align*}
The magic method also works for multivariate symmetric polynomials
of the eigenvalues of a set of commuting matrices, e.g. for $2\times2$
matrices $A$ and $B$ with eigenvalues $a_{1},a_{2}$ and \textbf{$b_{1},b_{2}$,}
\begin{align*}
a_{1}b_{1}a_{2}^{2}+a_{1}^{2}a_{2}b_{2} & =\det(AB_{1},A_{2}^{2})+\det(A_{1}^{2},AB_{2})
\end{align*}
\end{abstract}

\maketitle

\section{Introduction}

Let $A$ be an $n\times n$ matrix with eigenvalues $a_{1},\dots,a_{n}$.
It is well known that
\begin{align*}
a_{1}+a_{2}+\dots+a_{n} & =\mathrm{tr}(A)=A_{11}+A_{22}+\dots+A_{nn}\\
a_{1}a_{2}\cdots a_{n} & =\det(A)=\sum_{\sigma\in S_{n}}(-1)^{\sigma}A_{1,\sigma(1)}A_{2,\sigma(2)}\cdots A_{n,\sigma(n)}
\end{align*}
By applying the fundamental theorem of symmetric polynomials to the
characteristic polynomial of $A$, we find that there must be such
an equation between any symmetric polynomial in the eigenvalues of
$A$ and \emph{some} polynomial in the entries of $A$. We give an
explicit formula for this polynomial in terms of determinants, which
generalises the equations above to any symmetric polynomial:
\begin{align*}
\sum_{i\in\mathbb{N}^{n}}p_{i}a_{1}^{i_{1}}\cdots a_{n}^{i_{n}} & =\sum_{i\in\mathbb{N}^{n}}p_{i}\det(A_{1}^{i_{1}},\dots,A_{n}^{i_{n}})
\end{align*}
At first, this may seem surprising, since eigenvalues are independent
of the choice of basis, whereas taking $k$-th columns of a matrix
is clearly basis dependent. Indeed, each term $p_{i}\det(A_{1}^{i_{1}},\dots,A_{n}^{i_{n}})$
is on its own not basis independent, only the whole sum is, and only
if the $p_{i}$ are coefficients of a symmetric polynomial (which
means $p_{(i_{\sigma(1)},\dots,i_{\sigma(n)})}=p_{(i_{1},\dots,i_{n})}$
for any permutation $\sigma$, or $p_{i\circ\sigma}=p_{i}$ in short,
and that only finitely many $p_{i}$ are nonzero). More generally,
\begin{align}
\sum_{i\in\mathbb{N}^{n}}p_{i}\det(A_{1}^{(i_{1})},\dots,A_{n}^{(i_{n})})\label{eq:detAj}
\end{align}
is basis independent for any family of matrices $A^{(j)}$, not necessarily
powers $A^{j}$ of a single matrix. That is, if we substitue $A^{(j)}\mapsto S^{-1}A^{(j)}S$
for some invertible matrix $S$, its value does not change. Furthermore,
if the $A^{(j)}$ commute, we have the identity 
\begin{align*}
\sum_{i\in\mathbb{N}^{n}}p_{i}a_{1}^{(i_{1})}\cdots a_{n}^{(i_{n})} & =\sum_{i\in\mathbb{N}^{n}}p_{i}\det(A_{1}^{(i_{1})},\dots,A_{n}^{(i_{n})})
\end{align*}
where $a_{k}^{(j)}$ is the $k$-th eigenvalue of $A^{(j)}$. 

Our strategy to prove this is to define a quantity that is basis independent
because its definition makes no reference to a basis, and then show
that it is equal to (\ref{eq:detAj}). Once we have shown that (\ref{eq:detAj})
is invariant under basis transformations, we pick a basis in which
the determinants become products of eigenvalues, which is possible
if the $A^{(j)}$ commute. 

By applying this identity to particular families of matrices, we get
the fundamental theorem of symmetric polynomials as a corollary, and
we are able to deduce various equations between eigenvalues and determinants,
such as those in the abstract.

\section{Preliminaries}

The proof is based on multilinear antisymmetric functions {[}1{]}.
\begin{defn*}
A function $f:V^{n}\to\mathbb{R}$ is 
\end{defn*}
\begin{itemize}
\item Multilinear if $f$ is linear in each argument, i.e. $v_{k}\mapsto f(v_{1},\dots,v_{k},\dots,v_{n})$
is linear, with $v_{i}\in V$.
\item Antisymmetric if applying a permutation $\sigma\in S_{n}$ to its
arguments multiplies it by the sign of the permutation, i.e. $f(v_{\sigma(1)},\dots,v_{\sigma(n)})=(-1)^{\sigma}f(v_{1},\dots,v_{n})$,
with $v_{i}\in V$.
\end{itemize}
Multilinear antisymmetric functions form a vector space.
\begin{defn*}
Let $\bigwedge^{n}V^{*}\subset V^{n}\to\mathbb{R}$ be the space of
multilinear antisymmetric functions.
\end{defn*}
The only property of $\bigwedge^{n}V^{*}$ we will need is that $\dim(\bigwedge^{n}V^{*})=1$
if $n=\dim(V)$. In fact, in general $\dim(\bigwedge^{n}V^{*})={\dim(V) \choose n}$
{[}1{]}. Since it is the basis of our main theorem, we will give a
proof here.
\begin{lem*}
$\dim(\bigwedge^{n}V^{*})=1$ if $\dim(V)=n$.
\end{lem*}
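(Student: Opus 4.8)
The plan is to fix a basis $e_{1},\dots,e_{n}$ of $V$ and prove the two bounds $\dim(\bigwedge^{n}V^{*})\le 1$ and $\dim(\bigwedge^{n}V^{*})\ge 1$ separately. Write $(v)_{j}$ for the $j$-th coordinate of a vector $v\in V$ in this basis.

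For the upper bound, I would show that any $f\in\bigwedge^{n}V^{*}$ is completely determined by the single scalar $f(e_{1},\dots,e_{n})$. Expanding by multilinearity, $f(v_{1},\dots,v_{n})=\sum_{(j_{1},\dots,j_{n})}(v_{1})_{j_{1}}\cdots(v_{n})_{j_{n}}\,f(e_{j_{1}},\dots,e_{j_{n}})$, the sum ranging over all index tuples in $\{1,\dots,n\}^{n}$. Antisymmetry forces $f$ to vanish on any tuple of arguments having a repeated entry, since swapping the two equal arguments must both fix $f$ and negate it; hence only tuples whose entries are a permutation $\sigma$ of $(1,\dots,n)$ contribute, and for those antisymmetry gives $f(e_{\sigma(1)},\dots,e_{\sigma(n)})=(-1)^{\sigma}f(e_{1},\dots,e_{n})$. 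So $f$ equals the fixed function $(v_{1},\dots,v_{n})\mapsto\sum_{\sigma\in S_{n}}(-1)^{\sigma}(v_{1})_{\sigma(1)}\cdots(v_{n})_{\sigma(n)}$ scaled by $f(e_{1},\dots,e_{n})$, so the space is at most one-dimensional.

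For the lower bound I would exhibit one nonzero element, namely
\[
D(v_{1},\dots,v_{n})=\sum_{\sigma\in S_{n}}(-1)^{\sigma}\,(v_{1})_{\sigma(1)}\cdots(v_{n})_{\sigma(n)},
\]
which is the determinant of the matrix whose columns are $v_{1},\dots,v_{n}$. Multilinearity is immediate, since each summand is a product containing exactly one coordinate of each $v_{k}$. Antisymmetry follows by checking that reindexing the summation $\sigma\mapsto\sigma\tau$ for a transposition $\tau$ permutes the arguments accordingly while flipping the sign, and transpositions generate $S_{n}$. Since $D(e_{1},\dots,e_{n})=1\ne 0$, the space is at least one-dimensional, and combining the two bounds gives $\dim(\bigwedge^{n}V^{*})=1$.

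The only step needing genuine care is the antisymmetry of $D$: one has to track how the sign $(-1)^{\sigma}$ and the relabelling of factors interact under the substitution $\sigma\mapsto\sigma\tau$, which is the standard bookkeeping behind the Leibniz formula. The remaining ingredients — the multilinear expansion, the vanishing on tuples with a repeated argument, and the evaluation $D(e_{1},\dots,e_{n})=1$ — are routine.
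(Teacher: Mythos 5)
Your proof is correct and follows essentially the same route as the paper: expand $f(v_{1},\dots,v_{n})$ by multilinearity, use antisymmetry to kill tuples with repeated indices and reduce the rest to a signed sum over permutations, concluding that $f$ is $f(e_{1},\dots,e_{n})$ times the Leibniz determinant. The one place you are more careful than the paper is the lower bound: the paper merely asserts that any chosen value of $f(e_{1},\dots,e_{n})$ yields an element of $\bigwedge^{n}V^{*}$, whereas you explicitly check that $D$ is multilinear, antisymmetric, and nonzero, which is a worthwhile addition.
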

\begin{proof}
Let $f\in\bigwedge^{n}V^{*}$ and $v_{1},\dots,v_{n}\in V$. Take
a basis $e_{1},\dots,e_{n}$ for $V$. Writing the $v_{i}$ in terms
of the basis, we have coefficients $a_{ij}\in\mathbb{R}$ for $i,j\in\{1\dots n\}$
such that $v_{i}=\sum_{j}a_{ij}e_{j}$. The proof proceeds by expanding
$f(v_{1},\dots,v_{n})$ by multilinearity, and then using the fact
that $f$ is zero whenever there is a duplicate argument, because
applying a permutation that swaps those two positions gives $f(\dots,w,\dots,w,\dots)=-f(\dots,w,\dots,w,\dots)$,
which implies $f(\dots,w,\dots,w,\dots)=0$. This allows us to convert
the sum over any pattern of indexing into a sum over permutations.
\begin{align*}
f(v_{1},\dots,v_{n}) & =f(\sum_{i_{1}=1}^{n}a_{1i_{1}}e_{i_{1}},\dots,\sum_{i_{n}=1}^{n}a_{1i_{n}}e_{i_{n}})\\
 & =\sum_{i_{1}=1}^{n}\dots\sum_{i_{n}=1}^{n}a_{1i_{1}}\dots a_{ni_{n}}f(e_{i_{1}},\dots,e_{i_{n}})\\
 & =\sum_{i\in\{1\dots n\}\to\{1\dots n\}}a_{1i(1)}\dots a_{ni(n)}f(e_{i(1)},\dots,e_{i(n)})\\
 & =\sum_{\sigma\in S_{n}}a_{1\sigma(1)}\dots a_{n\sigma(n)}f(e_{\sigma(1)},\dots,e_{\sigma(n)})\\
 & =\sum_{\sigma\in S_{n}}(-1)^{\sigma}a_{1\sigma(1)}\dots a_{n\sigma(n)}f(e_{1},\dots,e_{e})\\
 & =\det(a)f(e_{1},\dots,e_{n})
\end{align*}
Therefore, the value $f\in\bigwedge^{n}V^{*}$ is entirely determined
by its value on $f(e_{1},\dots,e_{n})$, and conversely, any value
chosen for $f(e_{1},\dots,e_{n})$ determines a multilinear antisymmetric
function $f\in\bigwedge^{n}V^{*}$ via the equation above, so the
space is one dimensional.
\end{proof}

\section{Proof}

Let $\mathbb{I}$ be some index set. We say that $p_{i}\in\mathbb{R}$
for $i\in\mathbb{I}^{n}$ are \emph{symmetric coefficients} if only
finitely many $p_{i}$ are nonzero, and $p_{i\circ\sigma}=p_{i}$
for all $i\in\mathbb{I}^{n}$ and permutations $\sigma\in S_{n}$.
Let $\vec{A}^{(j)}:V\to V$ for $j\in\mathbb{I}$ be a family of linear
maps on a vector space $V$ of dimension $n$.\\

\textcolor{teal}{}%
\noindent{\fboxsep 10pt\fbox{\begin{minipage}[t]{1\columnwidth - 2\fboxsep - 2\fboxrule}%
\begin{example*}
\textcolor{black}{A polynomial $p(x_{1},\dots,x_{n})$ on $n$ variables
can be written as a sum of monomials 
\begin{align*}
p(x_{1},\dots,x_{n}) & =\sum_{i\in\mathbb{N}^{n}}p_{i}x_{1}^{i_{i}}\cdots x_{n}^{i_{n}}
\end{align*}
 where $p_{i}=p_{(i_{1},i_{2},\dots,i_{n})}$ is the coefficient of
$x_{1}^{i_{1}}x_{2}^{i_{2}}\dots x_{n}^{i_{n}}$. If $p$ is a symmetric
polynomial, then $p_{i}$ are symmetric coefficients with index set
$\mathbb{I}=\mathbb{N}$. For instance, if 
\begin{align*}
p(x_{1},x_{2},x_{3}) & =x_{1}+x_{2}+x_{3}=x_{1}^{1}x_{2}^{0}x_{3}^{0}+x_{1}^{0}x_{2}^{1}x_{3}^{0}+x_{1}^{0}x_{2}^{0}x_{3}^{1}
\end{align*}
then $p_{i}=1$ for $i=(1,0,0),(0,1,0),(0,0,1)$ and $p_{i}=0$ otherwise.
Note that $p_{i\circ\sigma}=p_{i}$ for all permutations $\sigma\in S_{3}$.}
\end{example*}
\end{minipage}}}\textcolor{teal}{}\\

We now define a linear map $p(\vec{A}):\bigwedge^{n}V^{*}\to\bigwedge^{n}V^{*}$
in terms of the matrices $\vec{A}^{(j)}$ and symmetric coefficients
$p_{i}$. The key observation is that a linear map from a one-dimensional
vector space to itself is just multiplication by a scalar, so we are
in effect defining a scalar here. We shall soon see that this scalar
is precisely \ref{eq:detAj}.
\begin{defn}
\label{def:pA}If $p_{i}$ are symmetric coefficients, we define $p(\vec{A}):\bigwedge^{n}V^{*}\to\bigwedge^{n}V^{*}$
by
\begin{align*}
p(\vec{A})f(v_{1},\dots,v_{n}) & =\sum_{i\in\mathbb{I}^{n}}p_{i}f(A^{(i_{1})}v_{1},\dots,A^{(i_{n})}v_{n})
\end{align*}

This does indeed preserve antisymmetry, by change of summation variable
$i=j\circ\sigma$, and using $p_{j\circ\sigma}=p_{j}$:
\begin{align*}
p(\vec{A})f(v_{\sigma(1)},\dots,v_{\sigma(n)}) & =\sum_{i\in\mathbb{I}^{n}}p_{i}f(A^{(i_{1})}v_{\sigma(1)},\dots,A^{(i_{n})}v_{\sigma(n)})\\
 & =\sum_{j\in\mathbb{I}^{n}}p_{j\circ\sigma}f(A^{(j_{\sigma(1)})}v_{\sigma(1)},\dots,A^{(j_{\sigma(n)})}v_{\sigma(n)})\\
 & =\sum_{j\in\mathbb{I}^{n}}p_{j}(-1)^{\sigma}f(A^{(j_{1})}v_{1},\dots,A^{(j_{n})}v_{n})\\
 & =(-1)^{\sigma}p(\vec{A})f(v_{1},\dots,v_{n})
\end{align*}
\\
\end{defn}

\textcolor{teal}{}%
\noindent{\fboxsep 10pt\fbox{\begin{minipage}[t]{1\columnwidth - 2\fboxsep - 2\fboxrule}%
\begin{example*}
\textcolor{black}{Given a single matrix $A\in\mathbb{R}^{3\times3}$
and index set $\mathbb{I}=\mathbb{N}$, we can pick $\vec{A}^{(j)}=A^{j}$
to be the powers of that matrix. For the symmetric coefficients $p_{i}$
from the running example, we have
\begin{align*}
p(\vec{A})f(v_{1},v_{2},v_{3}) & =f(A^{1}v_{1},A^{0}v_{2},A^{0}v_{3})+f(A^{0}v_{1},A^{1}v_{2},A^{0}v_{3})+f(A^{0}v_{1},A^{0}v_{2},A^{1}v_{3})\\
 & =f(Av_{1},Iv_{2},Iv_{3})+f(Iv_{1},Av_{2},Iv_{3})+f(Iv_{1},Iv_{2},Av_{3})\\
 & =f(Av_{1},v_{2},v_{3})+f(v_{1},Av_{2},v_{3})+f(v_{1},v_{2},Av_{3})
\end{align*}
}
\end{example*}
\end{minipage}}}\textcolor{teal}{}\\

We use the notation $[p(\vec{A})]\in\mathbb{R}$ for the scalar corresponding
to $p(\vec{A})\in End(\bigwedge^{n}V^{*})$, so that $p(\vec{A})f=[p(\vec{A})]f$
for all $f\in\bigwedge^{n}V^{*}$. Since $p(\vec{A})$ has been defined
in terms of the $A^{(j)}$, the scalar $[p(\vec{A})]\in\mathbb{R}$
is a function of the maps $A^{(j)}:V\to V$. Since we have not used
the choice of a basis for $V$ to define $p(\vec{A})$, the scalar
$[p(\vec{A})]$ is manifestly invariant under change of basis.

Pick a basis $B:\mathbb{R}^{n}\to V$ for $V$ (with basis vectors
$b_{i}=Be_{i}$, where $e_{i}\in\mathbb{R}^{n}$ is the standard basis).
We have matrix representations $M^{(j)}=B^{-1}A^{(j)}B\in\mathbb{R}^{n\times n}$
for the $A^{(j)}:V\to V$, and we wish to calculate $[p(\vec{A})]\in\mathbb{R}$
explicitly terms of the entries of $M^{(j)}\in\mathbb{R}^{n\times n}$.
\begin{thm}
\label{thm:det-formula}$[p(\vec{A})]=\sum_{i\in\mathbb{I}^{n}}p_{i}\det(M_{1}^{(i_{1})},\dots,M_{n}^{(i_{n})})$\\
\end{thm}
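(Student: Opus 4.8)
The plan is to evaluate the operator $p(\vec A)$ on a \emph{specific} nonzero element of $\bigwedge^n V^*$, namely the determinant form associated to the chosen basis $B$, and read off the scalar $[p(\vec A)]$ from the result. Concretely, let $\varepsilon\in\bigwedge^n V^*$ be defined by $\varepsilon(w_1,\dots,w_n)=\det(C)$, where $C$ is the matrix whose $k$-th column is $B^{-1}w_k\in\mathbb{R}^n$; equivalently $\varepsilon$ is the unique element of $\bigwedge^n V^*$ with $\varepsilon(b_1,\dots,b_n)=1$. This $\varepsilon$ is nonzero, so by the one-dimensionality Lemma it suffices to compute $p(\vec A)\,\varepsilon$ at the single tuple $(b_1,\dots,b_n)$: then $[p(\vec A)]=\big(p(\vec A)\varepsilon\big)(b_1,\dots,b_n)$.

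The computation itself is a direct unwinding of Definition~\ref{def:pA}. Applying it with $f=\varepsilon$ and $v_k=b_k$ gives
\begin{align*}
\big(p(\vec A)\varepsilon\big)(b_1,\dots,b_n) &= \sum_{i\in\mathbb{I}^n} p_i\,\varepsilon\big(A^{(i_1)}b_1,\dots,A^{(i_n)}b_n\big).
\end{align*}
Now I would observe that $B^{-1}A^{(j)}b_k = B^{-1}A^{(j)}Be_k = M^{(j)}e_k$, which is exactly the $k$-th column $M^{(j)}_k$ of the matrix representation $M^{(j)}$. By the definition of $\varepsilon$, the term $\varepsilon\big(A^{(i_1)}b_1,\dots,A^{(i_n)}b_n\big)$ is therefore the determinant of the matrix with columns $M^{(i_1)}_1,\dots,M^{(i_n)}_n$, i.e. $\det\big(M^{(i_1)}_1,\dots,M^{(i_n)}_n\big)$. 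Summing against $p_i$ yields exactly the claimed formula.

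The one point requiring a little care — and the only real obstacle — is confirming that $\varepsilon$ as described genuinely lies in $\bigwedge^n V^*$ and is nonzero, so that the reduction to a single evaluation is legitimate; but this is immediate, since $w\mapsto B^{-1}w$ is linear and the ordinary determinant on $\mathbb{R}^{n\times n}$ is multilinear and antisymmetric in its columns, and $\varepsilon(b_1,\dots,b_n)=\det(I)=1\neq 0$. No commutativity of the $A^{(j)}$ is needed here; that hypothesis enters only afterwards, when one further specializes the basis to diagonalize the family and thereby turns the determinants into products of eigenvalues. With $\varepsilon$ in hand the argument is just the chain of equalities above, so I would present it compactly rather than belaboring the bookkeeping.
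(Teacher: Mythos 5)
Your proposal is correct and follows essentially the same route as the paper's own proof: both evaluate $p(\vec A)$ on the form $f(w_1,\dots,w_n)=\det(B^{-1}w_1,\dots,B^{-1}w_n)$ at the basis tuple $(Be_1,\dots,Be_n)$ and read off the scalar from $B^{-1}A^{(j)}Be_k=M^{(j)}_k$. Your explicit check that this form is multilinear, antisymmetric, and nonzero is a small but welcome addition the paper leaves implicit.
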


\textcolor{teal}{}%
\noindent{\fboxsep 10pt\fbox{\begin{minipage}[t]{1\columnwidth - 2\fboxsep - 2\fboxrule}%
\begin{example*}
\textcolor{black}{For the running example, and $B=I$,
\begin{align*}
[p(\vec{A})] & =\det(A_{1},I_{2},I_{3})+\det(I_{1},A_{2},I_{3})+\det(I_{1},I_{2},A_{3})=A_{11}+A_{22}+A_{33}
\end{align*}
}
\end{example*}
\end{minipage}}}\textcolor{teal}{}\\

\begin{proof}
Since $p(\vec{A})$ is multiplication by a scalar $[p(\vec{A})]$,
\begin{align*}
p(\vec{A})f(v_{1},\dots,v_{n}) & =[p(\vec{A})]\cdot f(v_{1},\dots,v_{n})
\end{align*}
for all $f\in\bigwedge^{n}V^{*}$ and vectors $(v_{1},\dots,v_{n})$.
Taking $f(w_{1},\dots,w_{n})=\det(B^{-1}w_{1},\dots,B^{-1}w_{n})$
and $(v_{1},\dots,v_{n})=(Be_{1},\dots,Be_{n})$ to be the basis vectors,
on the right hand side 
\begin{align*}
f(v_{1},\dots,v_{n}) & =\det(B^{-1}Be_{1},\dots,B^{-1}Be_{n})=\det(e_{1},\dots,e_{n})=1
\end{align*}
and on the left hand side
\begin{align*}
p(\vec{A})f(v_{1},\dots,v_{n}) & =\sum_{i\in\mathbb{I}^{n}}p_{i}\det(B^{-1}A^{(i_{1})}Be_{1},\dots,B^{-1}A^{(i_{n})}Be_{n})=\sum_{i\in\mathbb{I}^{n}}p_{i}\det(M_{1}^{(i_{1})},\dots,M_{n}^{(i_{n})})
\end{align*}
giving $[p(\vec{A})]=\sum_{i\in\mathbb{I}^{n}}p_{i}\det(M_{1}^{(i_{1})},\dots,M_{n}^{(i_{n})})$.
\end{proof}
This shows that the value of $[p(\vec{A})]=\sum_{i\in\mathbb{I}^{n}}p_{i}\det(M_{1}^{(i_{1})},\dots,M_{n}^{(i_{n})})$
does not depend on the basis, since the left hand side is defined
without reference to the basis. By picking a basis in which the $A^{(j)}$
are all upper triangular, we can relate $[p(\vec{A})]$ to the eigenvalues
of the $A^{(j)}$.
\begin{thm}
\label{thm:eigs-formula}If the $A^{(j)}$ commute, then $[p(\vec{A})]=\sum_{i\in\mathbb{I}^{n}}p_{i}a_{1}^{(i_{1})}\cdots a_{n}^{(i_{n})}$,
where $a_{k}^{(j)}$ are the eigenvalues of $A^{(j)}$.\\
\\
\textcolor{teal}{}%
\noindent{\fboxsep 10pt\fbox{\begin{minipage}[t]{1\columnwidth - 2\fboxsep - 2\fboxrule}%
\begin{example*}
\textcolor{black}{For the running example, suppose we have a basis
in which $A$ is upper triangular (e.g. the Jordan basis), then 
\begin{align*}
[p(\vec{A})] & =\det(A_{1},I_{2},I_{3})+\det(I_{1},A_{2},I_{3})+\det(I_{1},I_{2},A_{3})=a_{1}+a_{2}+a_{3}
\end{align*}
 where $a_{1},a_{2},a_{3}$ are the eigenvalues of $A$.}
\end{example*}
\end{minipage}}}\textcolor{teal}{}\\
\end{thm}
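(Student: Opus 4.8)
The plan is to exploit the basis-independence of $[p(\vec{A})]$ noted right after Theorem~\ref{thm:det-formula}: since $[p(\vec{A})]=\sum_{i\in\mathbb{I}^{n}}p_{i}\det(M_{1}^{(i_{1})},\dots,M_{n}^{(i_{n})})$ holds for \emph{every} basis $B$, with $M^{(j)}=B^{-1}A^{(j)}B$, I am free to pick the basis that makes the right-hand side trivial to evaluate. For a commuting family, the right basis is one that simultaneously upper-triangularises every $A^{(j)}$.

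First I would reduce to the complex case: $\sum_{i}p_{i}\det(M_{1}^{(i_{1})},\dots,M_{n}^{(i_{n})})$ is a fixed polynomial in the entries of the $M^{(j)}$, and likewise $\sum_{i}p_{i}a_{1}^{(i_{1})}\cdots a_{n}^{(i_{n})}$ will be one, so it is enough to check the identity after extending scalars to $\mathbb{C}$, which preserves the commuting relations. (If one insists on working over $\mathbb{R}$, one simply notes that the $a_{k}^{(j)}$ are then complex in general, with the labelling supplied by the construction below.)

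Next I would establish simultaneous triangularisability. Only finitely many of the $A^{(j)}$ actually occur in $\sum_{i}p_{i}\det(\cdots)$ — namely those $j$ appearing as a coordinate of some $i$ with $p_{i}\neq0$ — so the relevant family may be taken finite. For a finite commuting family of operators on an $n$-dimensional complex vector space there is a basis $b_{1},\dots,b_{n}$ whose complete flag $0\subset\langle b_{1}\rangle\subset\langle b_{1},b_{2}\rangle\subset\cdots\subset V$ is invariant under every $A^{(j)}$, i.e. $A^{(j)}b_{k}\in\langle b_{1},\dots,b_{k}\rangle$ for all $j,k$. This is the standard induction: commuting operators over an algebraically closed field have a common eigenvector (take an eigenspace of one of them, which all the others preserve, and recurse), then pass to the quotient by its span, on which the family still commutes, and induct on $\dim V$. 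In this basis every $M^{(j)}=B^{-1}A^{(j)}B$ is upper triangular, with diagonal entries $a_{k}^{(j)}:=M^{(j)}_{kk}$ equal to the eigenvalues of $A^{(j)}$ — this is the labelling meant in the statement.

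Finally I would evaluate the determinants. Because each $M^{(j)}$ is upper triangular, its $k$-th column $M_{k}^{(j)}$ has nonzero entries only in rows $1,\dots,k$; hence for every multi-index $i$ the matrix with columns $M_{1}^{(i_{1})},\dots,M_{n}^{(i_{n})}$ is again upper triangular, so $\det(M_{1}^{(i_{1})},\dots,M_{n}^{(i_{n})})=\prod_{k=1}^{n}M^{(i_{k})}_{kk}=a_{1}^{(i_{1})}\cdots a_{n}^{(i_{n})}$. Multiplying by $p_{i}$, summing, and applying Theorem~\ref{thm:det-formula} yields $[p(\vec{A})]=\sum_{i\in\mathbb{I}^{n}}p_{i}a_{1}^{(i_{1})}\cdots a_{n}^{(i_{n})}$. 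The one genuinely non-elementary ingredient — and the main obstacle — is the simultaneous triangularisation of the commuting family; the rest is bookkeeping. It is also worth flagging in the write-up that, unlike the single-matrix case, the pairing of eigenvalues across the different $A^{(j)}$ is not arbitrary: it must be the one coming from the common flag (equivalently, from the joint generalised-eigenspace decomposition), since otherwise the right-hand side is not even well defined.
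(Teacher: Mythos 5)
Your proof is correct and follows essentially the same route as the paper: simultaneously upper-triangularise the commuting family, observe that the matrix with columns $M_{1}^{(i_{1})},\dots,M_{n}^{(i_{n})}$ is then upper triangular with diagonal entries $a_{k}^{(i_{k})}$, and substitute into Theorem~\ref{thm:det-formula}. The extra care you take --- extending scalars to $\mathbb{C}$, restricting to the finitely many relevant $A^{(j)}$, and flagging that the eigenvalue pairing must come from the common flag --- fills in details the paper delegates to its citation of the Schur decomposition, but does not change the argument.
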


\begin{proof}
Commuting matrices have a basis in which they are simultaneously upper
triangular, by Schur decomposition {[}2{]}. The diagonal of those
upper triangular matrices $M^{(j)}$ will contain the eigenvalues
$a_{k}^{(j)}$. In this case, $\det(M_{1}^{(i_{1})},\dots,M_{n}^{(i_{n})})$
is the determinant of an upper triangular matrix, with eigenvalues
$a_{k}^{(i_{k})}$ on the diagonal. Hence $\det(M_{1}^{(i_{1})},\dots,M_{n}^{(i_{n})})=a_{1}^{(i_{1})}\cdots a_{n}^{(i_{n})}$,
and substituting this into theorem (\ref{thm:det-formula}) gives
the eigenvalue formula for $p(\vec{A})$.
\end{proof}
The condition that the $A^{(j)}$ commute is not a necessary condition,
because there are upper triangular matrices that do not commute.

\section{Corollaries}

By combining theorems (\ref{thm:det-formula}) and (\ref{thm:eigs-formula})
we can justify the magic formula for converting symmetric expressions
involving eigenvalues into symmetric expressions involving determinants.
We first show this by example: let $A,B$ be commuting $3\times3$
matrices with eigenvalues $a_{1},a_{2},a_{3}$ and $b_{1},b_{2},b_{3}$.
Formally, we take the index set $\mathbb{I}=\{A,B\}$ in the theorems.
Next, we choose symmetric coefficients $p_{i}$ for $i\in\mathbb{I}^{3}$;
we must choose values for $p_{AAA},p_{AAB},p_{ABA},\dots,p_{BBB}$
that are symmetric under permutations of the indices. We choose $p_{AAB}=p_{ABA}=p_{BAA}=1$
and the rest $0$. The theorems (\ref{thm:det-formula}) and (\ref{thm:eigs-formula})
give us the equation
\begin{align*}
a_{1}a_{2}b_{3}+a_{1}b_{2}a_{3}+b_{1}a_{2}a_{3} & =\det(A_{1},A_{2},B_{3})+\det(A_{1},B_{2},A_{3})+\det(B_{1},A_{2},A_{3})
\end{align*}
We can now proceed to pick the matrices $A,B$ in this equation, as
long as they commute. For instance, given a matrix $C$, we could
pick $A=C^{3}$ and $B=C^{-1}$. Or, given commuting matrices $C,D$,
we could pick $A=C^{2}+D$ and $B=CD$. Or we could pick $A=(C+D)^{-1}$
and $B=\exp(CD)$. We thus obtain various relations between eigenvalues
and determinants by substituting, e.g. $A=(C+D)^{-1}$, $B=\exp(CD)$,
and $a_{i}=(c_{i}+d_{i})^{-1}$, $b_{i}=\exp(c_{i}d_{i})$ into the
equation.

Rather than trying to capture this general method in a theorem, we
present a few special cases.
\begin{cor}
Let $q(b_{1},\dots,b_{n})=\sum_{i\in\mathbb{N}^{n}}p_{i}b_{1}^{i_{1}}\dots b_{n}^{i_{n}}$
be a symmetric polynomial in the eigenvalues of an $n\times n$ matrix
$B$, then $q(b_{1},\dots,b_{n})=\sum_{i\in\mathbb{N}^{n}}p_{i}\det(B_{1}^{i_{1}},\dots,B_{n}^{i_{n}})$.
\end{cor}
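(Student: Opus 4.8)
The plan is to recognise this corollary as the special case of Theorems~\ref{thm:det-formula} and~\ref{thm:eigs-formula} obtained by taking the index set $\mathbb{I}=\mathbb{N}$ and the family $\vec{B}^{(j)}=B^{j}$ consisting of the powers of a single matrix $B$ (with $B^{0}=I$). First I would check the hypotheses: since $q$ is a polynomial only finitely many $p_{i}$ are nonzero, and since $q$ is symmetric we have $p_{i\circ\sigma}=p_{i}$, so the $p_{i}$ are symmetric coefficients in the sense of Section~3 --- this is exactly the observation in the first example box. The powers $B^{j}$ all commute with one another, so both theorems apply.

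Next I would apply Theorem~\ref{thm:det-formula} with the vector space $\mathbb{R}^{n}$ and the basis change taken to be the identity, so that the matrix representations $M^{(j)}$ are the powers $B^{j}$ themselves. This gives $[p(\vec{B})]=\sum_{i\in\mathbb{N}^{n}}p_{i}\det((B^{i_{1}})_{1},\dots,(B^{i_{n}})_{n})$, which is the right-hand side of the claimed identity written in the paper's column notation $B_{k}^{i_{k}}=(B^{i_{k}})_{k}$. Then I would apply Theorem~\ref{thm:eigs-formula}: the eigenvalues of $B^{j}$ are the $j$-th powers $b_{k}^{j}$ of the eigenvalues $b_{k}$ of $B$, so $[p(\vec{B})]=\sum_{i\in\mathbb{N}^{n}}p_{i}b_{1}^{i_{1}}\cdots b_{n}^{i_{n}}=q(b_{1},\dots,b_{n})$. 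Equating the two expressions for $[p(\vec{B})]$ yields the corollary.

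The only point needing a little care is the bookkeeping of the eigenvalue ordering in Theorem~\ref{thm:eigs-formula}: that theorem produces $a_{k}^{(i_{k})}$, the eigenvalue of $B^{i_{k}}$ sitting on the $k$-th diagonal slot of the simultaneously upper-triangular form. I would note that if $B$ is put in upper triangular form with diagonal $(b_{1},\dots,b_{n})$, then every power $B^{j}$ is automatically upper triangular with diagonal $(b_{1}^{j},\dots,b_{n}^{j})$, so the $k$-th diagonal entry of $B^{i_{k}}$ is precisely $b_{k}^{i_{k}}$ and the substitution $a_{k}^{(i_{k})}=b_{k}^{i_{k}}$ is legitimate with a consistent labelling of the $b_{k}$. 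There is no real obstacle here; the corollary is essentially a dictionary translation of the two theorems into the single-matrix, polynomial setting.
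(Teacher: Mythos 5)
Your proof is correct and is essentially the paper's own argument: the paper's proof is the one-line instruction to apply Theorems~\ref{thm:det-formula} and~\ref{thm:eigs-formula} with $\mathbb{I}=\mathbb{N}$ and $A^{(j)}=B^{j}$, and you have simply spelled out the hypothesis-checking and eigenvalue bookkeeping that this entails.
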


\begin{proof}
Apply theorems (\ref{thm:det-formula}) and (\ref{thm:eigs-formula})
with index set $\mathbb{I}=\mathbb{N}$ and matrices $A^{(j)}=B^{j}$.
\end{proof}
\begin{cor}
Let $q(x)=\sum_{k=0}^{n}a_{k}x^{k}$ be a polynomial with roots $r_{k}$.
Then a symmetric polynomial in the roots $r_{k}$ can be written as
a polynomial in the coefficients $a_{k}$.
\end{cor}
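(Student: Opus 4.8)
The plan is to exhibit a single $n \times n$ matrix whose eigenvalues are exactly the roots $r_{1},\dots,r_{n}$ and whose entries are polynomials in the coefficients $a_{k}$, and then apply the previous corollary (the one for symmetric polynomials in the eigenvalues of a matrix $B$). First I would normalise: since $q$ has degree $n$, the leading coefficient $a_{n}$ is nonzero, and $\tilde{q}(x):=q(x)/a_{n}=x^{n}+c_{n-1}x^{n-1}+\dots+c_{0}$ with $c_{k}:=a_{k}/a_{n}$ has the same roots $r_{1},\dots,r_{n}$, listed with multiplicity in a splitting field of $q$ — which is the ground field I would work over, so that the eigenvalue formula of Theorem~\ref{thm:eigs-formula} is available.

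Next I would take $C$ to be the companion matrix of $\tilde{q}$: ones on the subdiagonal, the entries $-c_{0},\dots,-c_{n-1}$ down the last column, and zeros elsewhere. A standard computation shows its eigenvalues, counted with algebraic multiplicity, are precisely the roots of $\tilde{q}$, namely $r_{1},\dots,r_{n}$; and every entry of $C$ is one of $0$, $1$, or $-c_{k}$, hence a polynomial — with integer coefficients — in $c_{0},\dots,c_{n-1}$. Given a symmetric polynomial $s(r_{1},\dots,r_{n})=\sum_{i\in\mathbb{N}^{n}}p_{i}\,r_{1}^{i_{1}}\cdots r_{n}^{i_{n}}$ in the roots, applying the previous corollary with $B=C$ gives $s(r_{1},\dots,r_{n})=\sum_{i\in\mathbb{N}^{n}}p_{i}\det(C_{1}^{i_{1}},\dots,C_{n}^{i_{n}})$. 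Each entry of a power $C^{m}$ is a polynomial with integer coefficients in the entries of $C$, and a determinant is a polynomial in the entries of its columns, so the right-hand side is a polynomial in $c_{0},\dots,c_{n-1}$, i.e. in $a_{0}/a_{n},\dots,a_{n-1}/a_{n}$. When $q$ is monic this is literally a polynomial in $a_{0},\dots,a_{n-1}$, which is the claim; in general, since a column $C_{j}^{i_{j}}$ has entries with denominator at most $a_{n}^{i_{j}}$, the determinant $\det(C_{1}^{i_{1}},\dots,C_{n}^{i_{n}})$ has denominator at most $a_{n}^{i_{1}+\dots+i_{n}}$, so $a_{n}^{D}\,s(r_{1},\dots,r_{n})$ is an honest polynomial in $a_{0},\dots,a_{n}$, where $D$ is the largest value of $i_{1}+\dots+i_{n}$ among the monomials occurring in $s$.

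Every step here is routine given the machinery of Section~3; I expect no real obstacle, only two points that need care. The first is the leading-coefficient bookkeeping just described — equivalently, the observation that "polynomial in the $a_{k}$" is literally true in the monic normalisation and otherwise holds after clearing a power of $a_{n}$. The second is that one must pass to a splitting field of $q$ as the ground field, both so that "the roots $r_{k}$" make sense and so that the single matrix $C$ is triangularisable and Theorem~\ref{thm:eigs-formula} applies.
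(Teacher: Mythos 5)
Your proposal is correct and follows exactly the paper's route: take $B$ to be the companion matrix of $q$, whose eigenvalues are the roots $r_k$, and apply the previous corollary. Your extra care with the leading coefficient is actually a small improvement on the paper's own proof, which asserts the companion-matrix entries are ``$0$, $1$, or $a_k$'' --- true only in the monic case, whereas in general they are $-a_k/a_n$ and one must either normalise or clear a power of $a_n$ as you do.
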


\begin{proof}
Apply the previous corollary with $B$ being the companion matrix
of $q$. The eigenvalues of $B$ are the $r_{k}$. The entries of
the companion matrix are all $0$, or $1$, or $a_{k}$, so $\det(B_{1}^{i_{1}},\dots,B_{n}^{i_{n}})$
is a polynomial in the $a_{k}$.
\end{proof}
A symmetric polynomial $q$ can be seen as a function of a vector
$x\in\mathbb{R}^{n}$ satisfying $q(Px)=q(x)$ for permutation matrices
$P$.
\begin{defn}
A multivariate symmetric polynomial $q(X)$ is a polynomial function
of the entries of a matrix $X\in\mathbb{R}^{n\times m}$ satisfying
$q(PX)=q(X)$ for permutation matrices $P$.
\end{defn}

The permutation $P$ permutes the rows of $X$, but keeps each row
together. A symmetric polynomial is the special case $m=1$, when
each row consists of a single entry.
\begin{cor}
Let $B^{(j)}$ for $j\in\{1,\dots,m\}$ be commuting matrices with
eigenvalues $b_{i}^{(j)}$, and define $X_{ij}=b_{i}^{(j)}$ to be
the matrix of eigenvalues. Then a multivariate symmetric polynomial
$q(X)$ can be written as a polynomial in the entries of the $B^{(j)}$.
\end{cor}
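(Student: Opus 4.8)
The plan is to reduce this corollary to the earlier theorems (\ref{thm:det-formula}) and (\ref{thm:eigs-formula}) by the same device used throughout the Corollaries section: pick a suitable index set and a suitable family of commuting linear maps, then expand $q$ in monomials. Concretely, I would write the multivariate symmetric polynomial $q(X)$ as a sum of monomials in the entries $X_{ij}$. Because $q(PX) = q(X)$ for every permutation matrix $P$, grouping the $m$ entries of row $i$ together, we can write
\begin{align*}
q(X) & = \sum_{i\in(\mathbb{N}^{m})^{n}} p_{i}\, \prod_{k=1}^{n}\prod_{j=1}^{m} X_{k j}^{(i_{k})_{j}}
\end{align*}
where each index $i_{k}\in\mathbb{N}^{m}$ records the multi-exponent $(i_{k})_{1},\dots,(i_{k})_{m}$ of the $m$ entries in row $k$, and the symmetry $q(PX)=q(X)$ translates exactly into $p_{i\circ\sigma}=p_{i}$, i.e.\ the $p_{i}$ are symmetric coefficients with index set $\mathbb{I}=\mathbb{N}^{m}$.

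Next I would exhibit the commuting family of matrices indexed by $\mathbb{I}=\mathbb{N}^{m}$. For a multi-index $\ell=(\ell_{1},\dots,\ell_{m})\in\mathbb{N}^{m}$ set
\begin{align*}
A^{(\ell)} & = (B^{(1)})^{\ell_{1}}(B^{(2)})^{\ell_{2}}\cdots(B^{(m)})^{\ell_{m}}.
\end{align*}
Since the $B^{(j)}$ commute with each other, all the $A^{(\ell)}$ commute, and in a basis that simultaneously upper-triangularises the $B^{(j)}$ (Schur, as in the proof of theorem (\ref{thm:eigs-formula})) the map $A^{(\ell)}$ is upper triangular with $k$-th diagonal entry $\prod_{j}(b_{k}^{(j)})^{\ell_{j}} = \prod_{j} X_{kj}^{\ell_{j}}$. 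Thus the $k$-th eigenvalue of $A^{(\ell)}$ is $a_{k}^{(\ell)} = \prod_{j=1}^{m} X_{kj}^{\ell_{j}}$, and plugging $\ell = i_{k}$ we get $a_{k}^{(i_{k})} = \prod_{j} X_{kj}^{(i_{k})_{j}}$, which is exactly the $k$-th factor appearing in the monomial expansion of $q$ above. Hence $q(X) = \sum_{i} p_{i}\, a_{1}^{(i_{1})}\cdots a_{n}^{(i_{n})} = [p(\vec A)]$ by theorem (\ref{thm:eigs-formula}).

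Finally, theorem (\ref{thm:det-formula}) gives $[p(\vec A)] = \sum_{i} p_{i}\det(M_{1}^{(i_{1})},\dots,M_{n}^{(i_{n})})$ for any chosen basis; taking $B=I$ so that $M^{(\ell)} = A^{(\ell)}$ is the literal matrix product $(B^{(1)})^{\ell_{1}}\cdots(B^{(m)})^{\ell_{m}}$, each $\det(M_{1}^{(i_{1})},\dots,M_{n}^{(i_{n})})$ is a polynomial in the entries of the $B^{(j)}$ (products and determinants of matrices with polynomial entries), and the finite sum $\sum_{i}p_{i}\det(\cdots)$ is therefore a polynomial in the entries of the $B^{(j)}$ equal to $q(X)$. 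The only point needing a little care — and the step I expect to be the main obstacle, though it is more bookkeeping than difficulty — is making the translation between "$q(PX)=q(X)$ for all permutation matrices $P$" and "the coefficients $p_{i}$ indexed by $i\in(\mathbb{N}^{m})^{n}$ satisfy $p_{i\circ\sigma}=p_{i}$" completely precise: one must check that permuting the rows of $X$ permutes the blocks $i_{1},\dots,i_{n}$ of row-multi-exponents as whole units (exactly the statement that each row is "kept together"), so that the hypothesis of the definition of symmetric coefficients is met with $\mathbb{I}=\mathbb{N}^{m}$.
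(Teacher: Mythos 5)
Your proposal is correct and is exactly the paper's argument: the paper's entire proof is ``Take $\mathbb{I}=\mathbb{N}^{m}$ and $A^{(g)}=(B^{(1)})^{g_{1}}\cdots(B^{(m)})^{g_{m}}$ in theorems (\ref{thm:det-formula}) and (\ref{thm:eigs-formula}).'' You have simply filled in the bookkeeping (the monomial expansion, the translation of $q(PX)=q(X)$ into $p_{i\circ\sigma}=p_{i}$, and the identification of the eigenvalues of $A^{(\ell)}$) that the paper leaves implicit.
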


\begin{proof}
Take $\mathbb{I}=\mathbb{N}^{m}$ and $A^{(g)}=(B^{(1)})^{g_{1}}\cdots(B^{(m)})^{g_{m}}$
in theorems (\ref{thm:det-formula}) and (\ref{thm:eigs-formula}).
\end{proof}

\section{Footnote}

The same definition (\ref{def:pA}) works for $p(\vec{A}):\bigwedge^{k}W^{*}\to\bigwedge^{k}V^{*}$
also when $W\neq V$ and $k\neq n$. We can view $p(\vec{A})f$ as
a generalised pullback of $f$ along a list of maps $A^{(j)}:V\to W$.
The ordinary pullback $A^{*}:\bigwedge^{k}W^{*}\to\bigwedge^{k}V^{*}$
is the special case of a single map $A:V\to W$. We have, in general
$p(X\vec{A}Y)=Y^{*}p(\vec{A})X^{*}$, where $X\vec{A}Y$ is simultaneous
conjugation $(X\vec{A}Y)^{(j)}=XA^{(j)}Y$. This gives us a slightly
stronger version of theorem (\ref{thm:det-formula}): if $k=\dim(V)=\dim(W)$,
then $X^{*}=\det(X)$ and $Y^{*}=\det(Y)$, so we see that if we multiply
the $M^{(j)}$ on the left by $X$ and on the right by $Y$, the value
of $[p(\vec{A})]$ gets multiplied by $\det(X)\det(Y)$. Theorem (\ref{thm:det-formula})
tells us that the value does not change if we do a basis transformation,
but here we get information about the case $X\neq Y^{-1}$. It is
also possible to generalise the proof of theorem (\ref{thm:det-formula})
directly.

\section*{References}

{[}1{]} Nicolas Bourbaki. Elements of mathematics, Algebra I. Springer-Verlag,
1989. 

{[}2{]} R.A. Horn and C.R. Johnson. Matrix Analysis. Cambridge University
Press, 1985.
\end{document}